\documentclass[12pt,a4paper,oneside]{amsart}
\usepackage{fullpage}
\usepackage{amssymb,amsfonts}
\usepackage{epsfig}
\usepackage{graphicx}

\usepackage{graphics}
\parskip10pt
\topmargin12mm
\oddsidemargin 0.2in \evensidemargin 0.2in
\textwidth 160 true mm \textheight 234 true mm \leftmargin 80mm
\newtheorem{theorem}{Theorem}[section]
\newtheorem{lemma}[theorem]{Lemma}

\theoremstyle{definition}
\newtheorem{definition}[theorem]{Definition}
\newtheorem{remark}[theorem]{Remark}

\numberwithin{equation}{section}

\DeclareMathOperator{\diam}{diam} 

\newcommand{\be}{\begin{equation}}
\newcommand{\ee}{\end{equation}}











\DeclareMathOperator{\loc}{loc}


\makeindex

\def\Xint#1{\mathchoice 
 {\XXint\displaystyle\textstyle{#1}}%
{\XXint\textstyle\scriptstyle{#1}}%
{\XXint\scriptstyle\scriptscriptstyle{#1}}%
 {\XXint\scriptscriptstyle\scriptscriptstyle{#1}}%
 \!\int}
\def\XXint#1#2#3{{\setbox0=\hbox{$#1{#2#3}{\int}$}
 \vcenter{\hbox{$#2#3$}}\kern-.5\wd0}}

 \def\dashint{\Xint-}


\begin{document}
\title{Removable sets for weighted  Orlicz-Sobolev spaces}
\author{Nijjwal Karak}
\address{Indian Statistical Institute Chennai Centre,  SETS campus, MGR Knowledge City, CIT Campus, Taramani, Chennai, 600 113}
\email{nijjwal@gmail.com}
\begin{abstract}
The aim in the present paper is to study removable sets for weighted Orlicz-Sobolev spaces. We generalize the definition of porous sets and show that the porous sets lying in a hyperplane are removable.
\end{abstract}
\maketitle
\indent Keywords: Removable sets, Porosity, Weighted Orlicz-Sobolev spaces.\\
\indent 2010 Mathematics Subject Classification: 31B15.
\section{Introduction}
Let $\mathbb{R}^n$ $(n\geq 2)$ denote the $n$-dimensional Euclidean space and $\Omega$ be an open set in $\mathbb{R}^n.$ We will often write a point $x\in\mathbb{R}^n$ as $x=(x',x_n),$ where $x'=(x_1,x_2,\ldots,x_{n-1})$ and $x_n\in\mathbb{R}^1.$\\
\indent In \cite{Kos99}, Koskela studied removable sets for Sobolev spaces $W^{1,p}(\mathbb{R}^n).$ If $E\subset\mathbb{R}^n$ is a closed set of zero Lebesgue $n$-measure, then we say that $E$ is removable for $W^{1,p}(\mathbb{R}^n)$ if $W^{1,p}(\mathbb{R}^n\setminus E)=W^{1,p}(\mathbb{R}^n)$ as sets. Recall that $u\in W^{1,p}(\Omega),$ $1\leq p<\infty$ provided $u\in L^p(\Omega)$ and there are functions $\partial_j u\in L^p(\Omega),$ $j=1,2,\ldots,n,$ so that
\begin{equation}\label{weak}
\int_{\Omega}u\partial_j\phi\,dx=-\int_{\Omega}\phi\partial_ju\,dx
\end{equation}
for each test function $\phi\in C_0^1(\Omega)$ and all $1\leq j\leq n.$ Here $dx$ denotes the usual Lebesgue measure. It is immediate that removability is a local behavior, that is, $E$ is removable for $W^{1,p}(\mathbb{R}^n)$ if and only if for each $x\in E$ there exists $r>0$ so that $W^{1,p}(B(x,r)\setminus E)=W^{1,p}(B(x,r))$ as sets. Also observe that $E$ is removable if and only if for each $u\in W^{1,p}(\mathbb{R}^n\setminus E)$ the functions $\partial_j u\in L^p(\mathbb{R}^n\setminus E)$ satisfy \eqref{weak} (with $\Omega=\mathbb{R}^n$) for each $\phi\in C_0^1(\mathbb{R}^n)$ and not only for $\phi\in C_0^1(\mathbb{R}^n\setminus E).$\\
\indent The structure of removable sets has been studied by several authors. Ahlfors and Beurling \cite{AB50} introduced the so called NED sets as the sets whose removal does not effect extremal length and proved that those sets are the removable singularities for Dirichlet finite analytic and univalent functions. The connection between removable sets and removable singularities for quasiconformal mappings has been studied in \cite{Res89}, \cite{KW96} and \cite{Wu98}. However, Koskela's work in \cite{Kos99} produce concrete criteria for removability using a concept of $p$-porosity.\\
\indent Porosity of a set lying on a hyperplane is essentially determined by their thickness, see \cite{Kos99} for the concrete definition. In \cite{Kos99}, Koskela proved that if $E$ is $p$-porous, then $E$ is removable for $W^{1,p}(\mathbb{R}^n).$ This result has been extended to Orlicz-Sobolev spaces in \cite{Kar15} and to weighted Sobolev spaces in \cite{FM03}. In this paper, we consider this problem of removability for weighted Orlicz-Sobolev spaces.\\
\indent Let $p>1,$ $-1<\alpha<p-1$ and $\lambda\in\mathbb{R}.$ When $E$ is restricted to subsets of $\mathbb{R}^{n-1},$ we consider the weights of the form $\rho(x)^{\alpha},$ where $\rho(x)$ denotes the distance of $x\in\mathbb{R}^n$ from the hyperplane $\mathbb{R}^{n-1},$ that is, $\rho(x)=\vert x_n\vert$ for $x=(x',x_n)\in\mathbb{R}^n.$ Let $\mu_{\alpha}$ be the Borel measure
\begin{equation*}
d\mu_{\alpha}(x)=\rho(x)^{\alpha}dx=\vert x_n\vert^{\alpha}dx,
\end{equation*}
where $x=(x',x_n)\in\mathbb{R}^n.$ Let $W^{1,\Psi}(\Omega,\mu_{\alpha})$ denote the weighted Orlicz-Sobolev spaces of all functions $u\in L^{\Psi}(\Omega,\mu_{\alpha})$ whose distributional gradient $\nabla u=(\partial_1u,\ldots,\partial_nu)$ also belongs to $L^{\Psi}(\Omega,\mu_{\alpha}),$ where $\Psi(t)=t^p\log^{\lambda}(e+t).$ Weighted Sobolev spaces has been studied in the book \cite{HKM06}, whereas details on Orlicz spaces and Orlicz-Sobolev spaces can be found in the book \cite{RR91} and in the dissertation \cite{Tuo04} respectively. If $E\subset\mathbb{R}^n$ is a closed set with $\mu_{\alpha}(E)=0,$ then we say that $E$ is removable for $W^{1,\Psi}(\mathbb{R}^n,\mu_{\alpha})$ if
\begin{equation*}
W^{1,\Psi}(\mathbb{R}^n\setminus E,\mu_{\alpha})=W^{1,\Psi}(\mathbb{R}^n,\mu_{\alpha}).
\end{equation*}    
As in \cite{Kos99}, $E$ is removable for $W^{1,\Psi}(\mathbb{R}^n,\mu_{\alpha})$ if and only if each function $u\in W^{1,\Psi}(\mathbb{R}^n\setminus E,\mu_{\alpha})$ the functions $\partial_ju\in L^{\Psi}(\mathbb{R}^n\setminus E,\mu_{\alpha})$ satisfy \eqref{weak} (with $\Omega=\mathbb{R}^n$) for each $\phi\in C_0^1(\mathbb{R}^n)$ and not only for $\phi\in C_0^1(\mathbb{R}^n\setminus E).$\\
\indent In \cite{Kar15}, the author has defined $(p,\lambda)$-porosity and proved that $(p,\lambda)$-porous sets lying on the hyperplane $\mathbb{R}^{n-1}$ are removable for $W^{1,\Psi}(\mathbb{R}^n)$ for Orlicz functions $\Psi(t)=t^p\log^{\lambda}(e+t).$ In this paper, $(p,\lambda)$-porosity has been generalized to $(p,\lambda,\alpha)$-porosity and removabilty of these sets lying in the hyperplane $\mathbb{R}^{n-1}$ for $W^{1,\Psi}(\mathbb{R}^n,\mu_{\alpha})$ has been established. For the definition of $(p,\lambda,\alpha)$-porous sets, please see Section 3. Roughly speaking, a set $E$ is $(p,\lambda,\alpha)$-porous if every small neighbourhood of every point in $E$ contains a hole with certain diameter. These holes are either balls or continua depending on the parameters $p,\lambda$ and $\alpha.$ Here is our main theorem:\\
\textbf{Theorem A.} Let $-1<\alpha<p-1,$ $1<p<n+\alpha$ and $\lambda\in\mathbb{R}.$ If $E$ is $(p,\lambda,\alpha)$-porous, then $E$ is removable for $W^{1,\Psi}(\mathbb{R}^n,\mu_{\alpha}).$ This is also true for $p=n+\alpha,$ and $\lambda\leq n+\alpha-1,$ where $\alpha>-1.$\\
The restriction $\lambda<n+\alpha-1$ for $p=n+\alpha$ is natural beacause of the same reason as discussed in Section 3 of \cite{Kar15}.\\
\indent The main idea behind the removability of $(p,\lambda,\alpha)$-porous sets is the following. As mentioned above, it suffices to prove that \eqref{weak} holds for each $u\in C^1(\Omega\setminus E)\cap W^{1,\Psi}(\Omega\setminus E,\mu_{\alpha})$ and for each $\phi\in C_0^1(\Omega).$ By the Fubini theorem and the usual integration by parts it suffices to show that the one sided limits $\lim_{t\rightarrow 0+}u(x',t)$ and $\lim_{t\rightarrow 0-}u(x',t)$ coincide for $\mathcal{H}^{n-1}$-a.e. $x=(x',0)\in E.$ This is established via sharp capacity estimates and the existence of \lq\lq holes\rq\rq\ in $E$ guaranteed by the porosity condition. Some of the ideas have been borrowed from \cite[Theorem 5.9]{HK98} and \cite{Kar15}.\\ 
\indent {\textit{Acknowledgement}.} I wish to thank Professor Pekka Koskela for helpful comments and suggestions. 
\section{Notations and Preliminaries}
A function $\Psi:[0,\infty)\rightarrow [0,\infty)$ is a Young function if
\begin{align*}
\Psi(s)=\int_0^s \psi(t)\, dt,
\end{align*}
where $\psi:[0,\infty)\rightarrow [0,\infty)$ with $\psi(0)=0,$ is an increasing, left-continuous function which is neither identically zero nor identically infinite on $(0,\infty).$ A Young function $\Psi$ is convex, increasing, left-continuous and satisfies
\begin{align*}
\Psi(0)=0, \ \lim_{t\rightarrow\infty}\Psi(t)=\infty . 
\end{align*}
The generalized inverse of a Young function $\Psi,$ $\Psi^{-1}:[0,\infty]\rightarrow[0,\infty],$ is defined by the formula
\begin{equation*}
\Psi^{-1}(t)=\inf\{s:\Psi(s)>t\},
\end{equation*}
where $\inf(\emptyset)=\infty.$ A Young function $\Psi$ and its generalized inverse satisfy the double inequality
\begin{equation*}
\Psi(\Psi^{-1}(t))\leq t \leq \Psi^{-1}(\Psi(t))
\end{equation*}
for all $t\geq 0.$ In this article we will only consider the Young functions $\Psi(t)=t^p\log^{\lambda}(e+t)$ and in that case we have for all $t\geq 0$
\begin{equation}\label{inverse}
\Psi^{-1}(t)\approx t^{\frac{1}{p}}/\log^{\frac{\lambda}{p}}(e+t).
\end{equation}
Here $A\approx B$ denotes the two sided inequality $C_1B\leq A\leq C_2B$ for some constants $C_1$ and $C_2.$ For a general Young function $\Psi,$ a Borel measure $\mu$ and an open set $\Omega\subset\mathbb{R}^n,$ the Orlicz space $L^{\Psi}(\Omega,\mu)$ is defined by
\begin{align*}
L^{\Psi}(\Omega,\mu)=\{u:\Omega\rightarrow[-\infty,\infty]: u ~\text{measurable},~ \int_{\Omega}\Psi(\alpha\vert u\vert)\, d\mu<\infty ~\text{for some}~ \alpha>0\}.
\end{align*}
A function $u\in L^{\Psi}(\Omega,\mu)$ is in the Orlicz-Sobolev space $W^{1,\Psi}(\Omega,\mu)$ if its weak partial derivatives (distributional derivatives) $\partial_j u$ belong to $L^{\Psi}(\Omega,\mu)$ for all $1\leq j\leq n.$ In this article we write $L^{\Psi}(\Omega)$ and $W^{1,\Psi}(\Omega)$ instead of $L^{\Psi}(\Omega,\mathcal{L}^n)$ and $W^{1,\Psi}(\Omega,\mathcal{L}^n)$ respectively.\\
\indent Let us also recall the Poincar\'e inequality. A pair $u\in L_{\loc}^1(\Omega)$ and a measurable function $g\geq 0$ satisfy a $(1,p)$-Poincar\'e inequality, $p\geq 1,$ if there exist constants $C_p>0$ and $\tau\geq 1,$ such that
\begin{align}\label{poincare}
\dashint_B\vert u-u_B\vert\, dz\leq C_pr\left(\dashint_{\tau B}g^p\, dz\right)^\frac{1}{p}
\end{align}
for each ball $B=B(x,r)$ satisfying $\tau B\subset\Omega.$ Recall that if $\Omega\subset\mathbb{R}^n$ and $u\in W_{\loc}^{1,1}(\Omega),$ then the inequality \eqref{poincare} holds for $g=\vert \nabla u\vert$ with $\tau=1,$ $p=1$ and the constant depending only on $n.$ Here and throughout the article, $u_B$ is the average of $u$ in $B(x,r)$ and the barred integrals are the averaged integrals, that is $\dashint_A v\, d\mu=\mu(A)^{-1}\int_A v\, d\mu.$\\
\indent Let $A\subset\mathbb{R}^n,$ $0\leq \alpha<\infty$ and $0<\delta\leq\infty.$ We define $\alpha$-dimensional Hausdorff measure by setting
\begin{equation*}
\mathcal{H}^{\alpha}(E)=\limsup_{\delta\rightarrow 0}H_{\delta}^{\alpha}(E),
\end{equation*}
where
\begin{equation*}
H_{\delta}^{\alpha}(E)=\inf\sum_i c(\alpha)(\diam(B_i))^{\alpha},
\end{equation*}
where $c(\alpha)$ is a fixed constant and the infimum is taken over all collections of balls $\{B_i\}_{i=1}^{\infty}$ such that $\diam(B_i)\leq\delta$ and $E\subset\bigcup_{i=1}^{\infty} B_i.$ Recall that the \textit{Hausdorff $\alpha$-content} of a set $E\subset\mathbb{R}^n$ is the number
$$\mathcal{H}_{\infty}^{\alpha}(E)=\inf\sum_i(\diam(B_i))^{\alpha},$$
where the infimum is taken over all countable covers of the set $E$ by balls $B_i.$\\
\indent For the convenience of reader we state here a fundamental covering lemma (for a proof see \cite[2.8.4-6]{Fed69} or \cite[Theorem 1.3.1]{Zie89}).
\begin{lemma}[5B-covering lemma]\label{cover}
Every family $\mathcal{F}$ of balls of uniformly bounded diameter in a metric space $X$ contains a pairwise disjoint subfamily $\mathcal{G}$ such that for every $B\in\mathcal{F}$ there exists $B'\in\mathcal{G}$ with $B\cap B'\neq\emptyset$ and $\diam(B)<2\diam(B').$ In particular, we have that
$$\bigcup_{B\in\mathcal{F}}B\subset\bigcup_{B\in\mathcal{G}}5B.$$
\end{lemma}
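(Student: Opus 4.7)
The plan is a standard greedy construction, organized by dyadic scales of diameter. First I would let $D := \sup_{B \in \mathcal{F}} \diam(B)$, which is finite by hypothesis, and partition $\mathcal{F}$ into generations
$$\mathcal{F}_k := \{B \in \mathcal{F} : D/2^k < \diam(B) \leq D/2^{k-1}\}, \qquad k = 1, 2, \ldots$$
Then I would inductively select a pairwise disjoint subfamily $\mathcal{G}_k$ in each generation, respecting all previous choices. Specifically, $\mathcal{G}_1$ is a maximal pairwise disjoint subfamily of $\mathcal{F}_1$, and once $\mathcal{G}_1, \ldots, \mathcal{G}_{k-1}$ are defined, $\mathcal{G}_k$ is a maximal pairwise disjoint subfamily of the collection of those $B \in \mathcal{F}_k$ that are disjoint from every ball in $\mathcal{G}_1 \cup \cdots \cup \mathcal{G}_{k-1}$. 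I would finally set $\mathcal{G} := \bigcup_{k \geq 1} \mathcal{G}_k$, which is pairwise disjoint by construction.

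Next, I would verify the intersection-and-diameter claim. Given $B \in \mathcal{F}$, let $k$ be the unique index with $B \in \mathcal{F}_k$. By the maximality in the construction of $\mathcal{G}_k$, the ball $B$ must meet some $B' \in \mathcal{G}_1 \cup \cdots \cup \mathcal{G}_k$ (either $B$ itself was admitted, in which case take $B' = B$, or it was blocked by such a $B'$). Any such $B'$ lies in some $\mathcal{F}_j$ with $j \leq k$, so $\diam(B') > D/2^j \geq D/2^k$, while $\diam(B) \leq D/2^{k-1}$; hence $\diam(B) < 2\diam(B')$. For the inclusion $B \subset 5B'$, writing $B' = B(z, r')$ so that $\diam(B') \leq 2r'$ and picking $w \in B \cap B'$, for any $y \in B$ one has
$$d(y, z) \leq d(y, w) + d(w, z) \leq \diam(B) + r' < 2\diam(B') + r' \leq 5r',$$
so $y \in 5B'$, giving $\bigcup_{B \in \mathcal{F}} B \subset \bigcup_{B' \in \mathcal{G}} 5B'$.

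The main obstacle is a set-theoretic one rather than a geometric one: if $\mathcal{F}$ is uncountable, one cannot merely enumerate the balls and pick greedily, so the existence of a maximal pairwise disjoint subfamily at each stage must be justified via Zorn's lemma applied to the poset of pairwise disjoint subfamilies ordered by inclusion (the union of a chain of such subfamilies is again pairwise disjoint, providing the required upper bounds). The dyadic partition by diameter is the crucial bookkeeping device: without it, a selected ball $B'$ could be much smaller than some later $B$ it blocks, and the bound $\diam(B) < 2\diam(B')$ would fail. Everything else is direct verification.
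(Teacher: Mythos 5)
The paper does not prove this lemma itself; it defers to \cite[2.8.4--6]{Fed69} and \cite[Theorem 1.3.1]{Zie89}, and the argument in those references is exactly the dyadic-scale greedy construction (maximal disjoint subfamilies generation by generation, with Zorn's lemma supplying maximality) that you give. Your proof is correct and matches the intended standard approach.
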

It is also worthy to mention that we use $C, c$ and $M$ to denote constants and they may change in every line, whereas we use $C_x$ and $c_x$ to denote constants depending on the point $x.$
\section{Proof of Theorem A}
First we generalize the definition of $(p,\lambda)$-porosity in \cite{Kar15} to $(p,\lambda,\alpha)$-porosity in a suitable way.
\begin{definition}\label{porosity}
Let $p>1, -1<\alpha<p-1$ and $\lambda\in\mathbb{R}.$ We say that $E\subset\mathbb{R}^{n-1}$ is $(p,\lambda,\alpha)$-porous, if for $\mathcal{H}^{n-1}$-a.e. $x\in E,$ there is a sequence of $r_i>0$ and a constant $c_x>0$ such that $r_i\rightarrow 0$ as $i\rightarrow\infty$ and each $(n-1)$-dimensional ball $B(x,r_i)$ contains\\
(i) a ball $B_i\subset B(x,r_i)\setminus E$ of radius $R_i$ with $R_i^{n+\alpha-p}\log^{\lambda}\left(\frac{1}{R_i}\right)\geq C_x{r_i}^{n-1}$ when $1+\alpha<p<n+\alpha-1$ and $\lambda\in\mathbb{R},$\\
(ii) a ball $B_i\subset B(x,r_i)\setminus E$ of radius $R_i$ with $R_i\log^{\lambda}\left(\frac{1}{R_i}\right)\geq C_x{r_i}^{n-1}$ when $p=n+\alpha-1$ and $\lambda\leq n+\alpha-1,$\\
(iii) a continuum $F_i\subset B(x,r_i)\setminus E$ of diameter $R_i$ with $R_i\log^{\lambda-(n+\alpha-1)}\left(\frac{1}{R_i}\right)\geq C_x{r_i}^{n-1}$ when $p=n+\alpha-1$ and $\lambda>n+\alpha-1,$\\
(iv) a continuum $F_i\subset B(x,r_i)\setminus E$ of diameter $R_i$ with $R_i^{n+\alpha-p}\log^{\lambda}\left(\frac{1}{R_i}\right)\geq C_x{r_i}^{n-1}$ when $n+\alpha-1<p\leq n+\alpha$ and $\lambda\in\mathbb{R},$
\end{definition}
\noindent Note that $(p,\lambda,0)$-porosity is same as $(p,\lambda)$-porosity defined in \cite{Kar15} except for the cases $p=n-1,$ $\lambda\leq n-1$ and $p=n.$\\
The following lemma is the key to prove Theorem A.
\begin{lemma}\label{mainlemma}
Let $\xi\in\mathbb{R}^{n-1}$ and $r>0.$ 
Let $A\subset B^n(\xi,r)\cap\mathbb{R}^{n-1}$ be a continuum with $\mathcal{H}^1_{\infty}(A)\geq r/3,$ when $n+\alpha-1<p\leq n+\alpha,$ $\lambda\in\mathbb{R}$ or $p=n+\alpha-1,$ $\lambda>n+\alpha-1.$ Otherwise let $A\subset B^n(\xi,r)\cap\mathbb{R}^{n-1}$ be a set with $\mathcal{H}^{n-1}_{\infty}(A)\geq r/3,$ when $1+\alpha<p<n+\alpha-1,$ $\lambda\in\mathbb{R}$ or $p=n+\alpha-1,$ $\lambda\leq n+\alpha-1.$ Suppose that $u\in W^{1,\Psi}(B^n(\xi,r)^+)\cap C(B^n(\xi,r)^+\cup A)$ satisfies $u=0$ on $A$ and $u_{B^n(\xi,r)^+}\geq 1/2,$ where $B^n(\xi,r)^+$ denotes the upper half of the $n$-dimensional ball $B(\xi,r).$ Then
\begin{equation*}
\int_{B^n(\xi,2r)^+}\Psi(\vert\nabla u\vert)\,d\mu_{\alpha}\geq
\begin{cases}
cr\log^{\lambda-(n+\alpha-1)}\left(\frac{1}{r}\right)& \text{when}~~ p=n+\alpha-1, \lambda>n+\alpha-1,\\
cr^{n+\alpha-p}\log^{\lambda}\left(\frac{1}{r}\right)& \text{otherwise}.
\end{cases}
\end{equation*}
\end{lemma}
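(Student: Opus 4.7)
\emph{Reduction.} I would first truncate $u$ by setting $\tilde u:=\min(\max(u,0),1/2)$; then $\tilde u$ still vanishes on $A$, is continuous on $B^n(\xi,r)^+\cup A$, and satisfies $|\nabla\tilde u|\le|\nabla u|$. A Chebyshev-type argument applied to the averaging hypothesis (now for a bounded non-negative function) produces universal constants $c_0,c_1>0$ and a set $F\subset B^n(\xi,r)^+$ with $|F|\gtrsim r^n$, $x_n\gtrsim r$ on $F$, and $\tilde u\ge c_0$ on $F$; the lower bound on $x_n$ is obtained by discarding the thin slab $\{x_n<c_1r\}$, whose Lebesgue measure is $\lesssim r^n$ and hence controllable. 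It suffices to prove the stated energy lower bound for this new $\tilde u$.

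\emph{Pointwise Riesz bound via Whitney chains.} For each pair $(y,a)\in F\times A$ I would build a Whitney chain of balls in $B^n(\xi,2r)^+$ along the segment $[y,a]$ with radii $\rho_k\asymp 2^{-k}|y-a|$, apply the $(1,1)$-Poincar\'e inequality \eqref{poincare} in each chain ball, and telescope using $\tilde u(a)=0$ and the continuity of $\tilde u$ at $a$ to obtain
\begin{equation*}
c_0\le \tilde u(y)\le C\int_{B^n(\xi,2r)^+}\frac{|\nabla\tilde u(x)|}{|x-a|^{n-1}}\,dx.
\end{equation*}

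\emph{Frostman averaging and Orlicz Young inequality.} I would then average this inequality against a Frostman probability measure $\nu$ supported on $A$ of dimension $s$, taking $s=n-1$ in cases (i) and (ii) of Definition \ref{porosity} (where the $(n-1)$-content hypothesis holds) and $s=1$ in cases (iii) and (iv) (where a continuum of $\mathcal H^1_\infty$-content $\gtrsim r$ is available). The content bounds on $A$ furnish $\nu(B(z,\rho))\le C(\rho/r)^s$. Writing $\mathcal I_\nu(x):=\int_A|x-a|^{-(n-1)}\,d\nu(a)$ and factorising $|x_n|^\alpha=|x_n|^{\alpha/p}\cdot|x_n|^{\alpha/p'}$, Young's inequality for the conjugate Orlicz pair $(\Psi,\Psi^*)$ gives, for any $\varepsilon>0$,
\begin{equation*}
c_0\le \varepsilon\int_{B^n(\xi,2r)^+}\Psi(|\nabla\tilde u|)\,d\mu_\alpha+C_\varepsilon\int_{B^n(\xi,2r)^+}\Psi^*\!\bigl(\mathcal I_\nu(x)|x_n|^{-\alpha/p}\bigr)|x_n|^\alpha\,dx.
\end{equation*}
A direct evaluation of the second integral, based on \eqref{inverse} together with the analogous asymptotic $\Psi^*(t)\approx t^{p'}\log^{-\lambda(p'-1)}(e+t)$, the Frostman bound on $\nu$, and polar integration in $|x_n|$, should reproduce the reciprocal of the four quantities appearing on the right-hand side of the lemma in the four regimes. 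Rearranging for $\varepsilon$ small completes the proof.

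\emph{Main obstacle.} The delicate step is the last one: sharply tracking the logarithmic factor across the critical thresholds $p=n+\alpha-1$ and $p=n+\alpha$. The choice of Frostman dimension $s$ and the asymptotic behaviour of $\Psi^*$ must be matched so that the weighted $\Psi^*$-integral of $\mathcal I_\nu$ is finite with the correct scaling; this matching is exactly what forces Definition \ref{porosity} to split into four regimes at these thresholds, with $s=1$ being necessary precisely when the $(n-1)$-Frostman potential fails the required $\Psi^*$ estimate.
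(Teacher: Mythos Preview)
Your reduction step has a genuine gap. Truncating to $\tilde u=\min(\max(u,0),1/2)$ does \emph{not} preserve a lower bound on the average: if $u$ equals a large constant $N$ on a set of $\mu_\alpha$-measure $\mu_\alpha(B^+)/(2N)$ and vanishes elsewhere, then $u_{B^+}=1/2$ but $\tilde u_{B^+}=1/(4N)$, so no Chebyshev argument on $\tilde u$ will produce a set $F$ of measure $\gtrsim r^n$ on which $\tilde u\ge c_0$. Since there is no a priori $L^\infty$ bound on $u$, this obstruction is real. The paper circumvents it by a dichotomy rather than a truncation: either some $y\in A$ has $u_{B^n(y,r)^+}\le 1/3$, in which case a single weighted Poincar\'e inequality between $B^n(y,r)^+$ and $B^n(\xi,r)^+$ already gives the energy bound, or else $u_{B^n(x,r)^+}\ge 1/3$ for \emph{every} $x\in A$, and one can chain half-balls $B^n(x,2^{-j}r)^+$ down to each $x\in A$. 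You would need the same dichotomy (or something equivalent) to launch your chain argument.

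Once that is fixed, your core strategy---Riesz potential bound followed by Frostman averaging over $A$ and Young's inequality for the pair $(\Psi,\Psi^*)$---is a genuinely different route from the paper's. The paper instead telescopes along half-balls centred on each $x\in A$, handles the logarithm by splitting $\{|\nabla u|\le (2^{-j}r)^{-1/2}\}$ versus its complement (so that the log factor is essentially constant on the ``bad'' part), deduces for each $x\in A$ a scale $j_x$ at which the localized $\Psi$-energy is large, and then aggregates over $A$ via the $5B$-covering lemma against the Hausdorff content hypothesis. Your approach packages the logarithm into $\Psi^*$ and the geometry of $A$ into a Frostman measure; it is more conceptual, but the ``direct evaluation'' of $\int_{B^+}\Psi^*\bigl(\mathcal I_\nu(x)\,|x_n|^{-\alpha/p}\bigr)|x_n|^\alpha\,dx$ that you defer is exactly where all the case distinctions live, and one must check that the log inside $\Psi^*$ interacts correctly with the log in $\mathcal I_\nu(x)\lesssim r^{-(n-1)}\log(r/x_n)$ (for $s=n-1$) to reproduce the threshold at $p=n+\alpha-1$, $\lambda=n+\alpha-1$. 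The paper's good/bad splitting makes this bookkeeping explicit and elementary, at the cost of treating four cases by hand.
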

\begin{proof}
Suppose that there exists a point $y\in A$ such that $u_{B^n(y,r)^+}\leq 1/3.$ Then we have by Poincar\'e inequality,
\begin{eqnarray}\label{lessthanzero}
\frac{1}{6}\leq\vert u_{B^n(y,r)^+}-u_{B^n(\xi,r)^+}\vert &\leq &M\dashint_{B^n(\xi,2r)^+}\vert u-u_{B^n(\xi,2r)^+}\vert\,d\mu_{\alpha}\nonumber\\
&\leq &Mr\dashint_{B^n(\xi,2r)^+}\vert\nabla u\vert\,d\mu_{\alpha}\\
&\leq &Mr\left(\dashint_{B^n(\xi,2r)^+}\vert\nabla u\vert ^p\,d\mu_{\alpha}\right)^{1/p}\nonumber\\
&\leq &Mr^{1-\frac{n+\alpha}{p}}\left(\int_{B^n(\xi,2r)^+}\vert\nabla u\vert ^p\,d\mu_{\alpha}\right)^{1/p}.\nonumber
\end{eqnarray}
First we consider the case $\lambda\geq 0.$ For this we split $B^n(\xi,2r)^+$ into \lq\lq good\rq\rq\, part $B^n(\xi,2r)^g$ and \lq\lq bad\rq\rq\, part $B^n(\xi,2r)^b$ where $B^n(\xi,2r)^g=\{x\in B^n(\xi,2r)^+:\vert\nabla u(x)\vert\leq r^{-1/2}\}$ and $B^n(\xi,2r)^b=\{x\in B^n(\xi,2r)^+:\vert\nabla u(x)\vert> r^{-1/2}\}.$ Using this splitting we obtain
\begin{equation*}
\frac{1}{6}\leq Mr^{1/2}+Mr^{1-\frac{n+\alpha}{p}}\log^{-\frac{\lambda}{p}}(e+r^{-1/2})\left(\int_{B^n(\xi,2r)^b}\vert\nabla u\vert ^p\log^{\lambda}(e+\vert\nabla u\vert)\,d\mu_{\alpha}\right)^{\frac{1}{p}}
\end{equation*}
which implies that
\begin{equation*}
\int_{B^n(\xi,2r)^+}\Psi(\vert\nabla u\vert)\,d\mu_{\alpha}\geq Cr^{n+\alpha-p}\log^{\lambda}\left(\frac{1}{r}\right)\left(\frac{1}{6}-Mr^{1/2}\right).
\end{equation*}
For the case $\lambda<0,$ we apply Jensen's inequality to \eqref{lessthanzero} and use \eqref{inverse} to get
\begin{eqnarray*}
\frac{1}{6}&\leq & Mr\Psi^{-1}\left(\dashint_{B^n(\xi,2r)^+}\Psi(\vert\nabla u\vert)\,d\mu_{\alpha}\right)\\
&\leq &\frac{Mr\left(\dashint_{B^n(\xi,2r)^+}\Psi(\vert\nabla u\vert)\,d\mu_{\alpha}\right)^{1/p}}{\log^{\frac{\lambda}{p}}\left(e+\dashint_{B^n(\xi,2r)^+}\Psi(\vert\nabla u\vert)\,d\mu_{\alpha}\right)}
\end{eqnarray*}
We can choose a constant $M'$ such that $\int_{B^n(\xi,2r)^+}\Psi(\vert\nabla u\vert)\,d\mu_{\alpha}\leq M'.$ Therefore
\begin{equation*}
\int_{B^n(\xi,2r)^+}\Psi(\vert\nabla u\vert)\,d\mu_{\alpha}\geq Mr^{n+\alpha-p}\log^{\lambda}\left(e+M'r^{-(n+\alpha)}\right).
\end{equation*} 
Now we may assume that $u_{B^n(x,r)^+}\geq 1/3$ for all $x\in A.$ Since every point of $A$ is a Lebesgue point of $u,$ we have by Poincar\'e inequality and H\"older inequality,
\begin{eqnarray}\label{lessthanzero2}
\frac{1}{3} \leq \vert u(x)-u_{B^n(x,r)^+}\vert\nonumber &\leq &\sum_{j=0}^{\infty}\vert u_{B^n(x,2^{-j}r)^+}-u_{B^n(x,2^{-j-1}r)^+}\vert\nonumber\\
&\leq & c\sum_{j=0}^{\infty}\dashint_{B^n(x,2^{-j}r)^+}\vert u-u_{B^n(x,2^{-j}r)^+}\vert\,d\mu_{\alpha}\nonumber\\
&\leq & c\sum_{j=0}^{\infty}2^{-j}r\dashint_{B^n(x,2^{-j}r)^+}\vert\nabla u\vert\,d\mu_{\alpha}\\
&\leq & c\sum_{j=0}^{\infty}2^{-j}r\left(\dashint_{B^n(x,2^{-j}r)^+}\vert\nabla u\vert ^p\,d\mu_{\alpha}\right)^{\frac{1}{p}}.\nonumber
\end{eqnarray}
We again split $B^n(x,2^{-j}r)^+$ into \lq\lq good\rq\rq\, parts $B^n(x,2^{-j}r)^g=\{z\in B^n(x,2^{-j}r)^+:\vert\nabla u(z)\vert\leq (2^{-j}r)^{-1/2}\}$ and \lq\lq bad\rq\rq\, parts $B^n(x,2^{-j}r)^b=\{z\in B^n(x,2^{-j}r)^+:\vert\nabla u(z)\vert>(2^{-j}r)^{-1/2}\}$ for all $j.$ This gives
\begin{equation*}
\frac{1}{3}\leq c\sum_{j=0}^{\infty}(2^{-j}r)^{\frac{1}{2}}+c\sum_{j=0}^{\infty}(2^{-j}r)^{1-\frac{n+\alpha}{p}}\log^{-\frac{\lambda}{p}}\left(e+(2^{-j}r)^{-\frac{1}{2}}\right)\left(\int_{B^n(x,2^{-j}r)^b}\Psi(\vert\nabla u\vert)\,d\mu_{\alpha}\right)^{\frac{1}{p}}
\end{equation*}
and hence
\begin{equation}\label{cases}
\frac{1}{3}-2cr^{\frac{1}{2}}\leq c\sum_{j=0}^{\infty}(2^{-j}r)^{1-\frac{n+\alpha}{p}}\log^{-\frac{\lambda}{p}}\left(e+(2^{-j}r)^{-\frac{1}{2}}\right)\left(\int_{B^n(x,2^{-j}r)^b}\Psi(\vert\nabla u\vert)\,d\mu_{\alpha}\right)^{\frac{1}{p}},
\end{equation}
for $\lambda\geq 0$ (for $\lambda<0,$ we apply Jensen's inequality to \eqref{lessthanzero2} and use \eqref{inverse} as above to get such an inequality).\\
\textbf{Case I. $n+\alpha-1<p\leq n+\alpha,$ $\lambda\in\mathbb{R}.$} If for each $j=0,1,2,\ldots$ and for a fixed $\epsilon>0$
\begin{equation*}
\int_{B^n(x,2^{-j}r)^b}\Psi(\vert\nabla u\vert)\,d\mu_{\alpha}\leq\epsilon r^{n+\alpha-1-p}(2^{-j}r)\log^{\lambda}\left(\frac{1}{r}\right),
\end{equation*}
then we have that
\begin{equation*}
\frac{1}{3}-2cr^{\frac{1}{2}}\leq c\epsilon^{1/p}\log^{\frac{\lambda}{p}}\left(\frac{1}{r}\right)\sum_{j=0}^{\infty}(2^{-j})^{(p-n-\alpha+1)/p}\log^{-\frac{\lambda}{p}}\left(e+(2^{-j}r)^{-\frac{1}{2}}\right)\leq c\epsilon^{1/p},
\end{equation*}
because $p>n-1+\alpha.$ It follows that there is an index $j_x$ such that
\begin{equation*}
\int_{B^n(x,2^{-j_x}r)^+}\Psi(\vert\nabla u\vert)\,d\mu_{\alpha}\geq\epsilon_0 r^{n+\alpha-1-p}(2^{-j_x}r)\log^{\lambda}\left(\frac{1}{r}\right)
\end{equation*}
for some $\epsilon_0>0.$ Using 5B-covering lemma we find a pairwise disjoint collection of balls of the form $B_k=B^n(x_k,r_kr)$ such that $A\subset \cup_k5B_k$ and
\begin{equation*}
r_kr\leq Cr^{p+1-n-\alpha}\log^{-\lambda}\left(\frac{1}{r}\right)\int_{B_k^+}\Psi(\vert\nabla u\vert)\,d\mu_{\alpha}.
\end{equation*}
Hence we have
\begin{eqnarray*}
\frac{r}{3}\leq\mathcal{H}^1_{\infty}(A)\leq\sum_{k}5r_kr &\leq & Cr^{p+1-n-\alpha}\log^{-\lambda}\left(\frac{1}{r}\right)\sum_{k}\int_{B_k^+}\Psi(\vert\nabla u\vert)\,d\mu_{\alpha}\\
&\leq & Cr^{p+1-n-\alpha}\log^{-\lambda}\left(\frac{1}{r}\right)\int_{B^n(\xi,2r)^+}\Psi(\vert\nabla u\vert)\,d\mu_{\alpha},
\end{eqnarray*}
as desired.\\
Note that for $n=2$ it is enough to consider only Case I, but for $n\geq 3$ we will consider the following cases too.\\
\textbf{Case II. $p=n+\alpha-1,$ $\lambda>n+\alpha-1.$} We substitute the value of $p$ in \eqref{cases} to obtain
\begin{equation*}
\frac{1}{3}-2cr^{\frac{1}{2}}\leq c\sum_{j=0}^{\infty}(2^{-j}r)^{-\frac{1}{n+\alpha-1}}\log^{-\frac{\lambda}{n+\alpha-1}}\left(e+(2^{-j}r)^{-\frac{1}{2}}\right)\left(\int_{B^n(x,2^{-j}r)^b}\Psi(\vert\nabla u\vert)\,d\mu_{\alpha}\right)^{\frac{1}{n+\alpha-1}}.
\end{equation*}
If for each $j=0,1,2,\ldots$ and for a fixed $\epsilon>0$
\begin{equation*}
\int_{B^n(x,2^{-j}r)^b}\Psi(\vert\nabla u\vert)\,d\mu_{\alpha}\leq\epsilon (2^{-j}r)\log^{\lambda-(n+\alpha-1)}\left(\frac{1}{r}\right),
\end{equation*}
then we have that
\begin{equation*}
\frac{1}{3}-2cr^{\frac{1}{2}}\leq c\epsilon^{\frac{1}{n+\alpha-1}}\log^{\frac{\lambda}{n+\alpha-1}-1}\left(\frac{1}{r}\right)\sum_{j=0}^{\infty}\log^{-\frac{\lambda}{n+\alpha-1}}\left(e+(2^{-j}r)^{-\frac{1}{2}}\right)\leq c\epsilon^{\frac{1}{n+\alpha-1}}.
\end{equation*}
This guarantees us the existence of an index $j_x$ such that
\begin{equation*}
\int_{B^n(x,2^{-j_x}r)^+}\Psi(\vert\nabla u\vert)\,d\mu_{\alpha}\geq\epsilon_0 (2^{-j_x}r)\log^{\lambda-(n+\alpha-1)}\left(\frac{1}{r}\right)
\end{equation*}
for some $\epsilon_0>0.$ Then we use $5B$-covering lemma as above and the fact that $\frac{r}{3}\leq\mathcal{H}^1_{\infty}(A)$ to obtain
\begin{equation*}
\int_{B^n(\xi,2r)^+}\Psi(\vert\nabla u\vert)\,d\mu_{\alpha}\geq cr\log^{\lambda-(n+\alpha-1)}\left(\frac{1}{r}\right).
\end{equation*}
\textbf{Case III. $1+\alpha<p<n+\alpha-1$ $\lambda\in\mathbb{R}.$} If for each $j=0,1,2,\ldots$ and for a fixed $\epsilon>0$
\begin{equation*}
\int_{B^n(x,2^{-j}r)^b}\Psi(\vert\nabla u\vert)\,d\mu_{\alpha}\leq\epsilon r^{\alpha+1-p}(2^{-j}r)^{n-1}\log^{\lambda}\left(\frac{1}{r}\right),
\end{equation*}
then from \eqref{cases} we have that
\begin{equation*}
\frac{1}{3}-2cr^{\frac{1}{2}}\leq c\epsilon^{1/p}\log^{\frac{\lambda}{p}}\left(\frac{1}{r}\right)\sum_{j=0}^{\infty}(2^{-j})^{(p-\alpha-1)/p}\log^{-\frac{\lambda}{p}}\left(e+(2^{-j}r)^{-\frac{1}{2}}\right)\leq c\epsilon^{1/p},
\end{equation*}
because $\alpha<p-1.$ Keeping in mind that $\mathcal{H}^{n-1}_{\infty}(A)\geq r^{n-1}/3,$ we can prove, using 5B-covering lemma similar to the previous cases, that
\begin{equation*}
\int_{B^n(\xi,2r)^+}\Psi(\vert\nabla u\vert)\,d\mu_{\alpha}\geq cr^{n+\alpha-p}\log^{\lambda}\left(\frac{1}{r}\right).
\end{equation*}
\textbf{Case IV. $p=n+\alpha-1,$ $\lambda\leq n+\alpha-1.$} If for each $j=0,1,2,\ldots$ and for a fixed $\epsilon>0$
\begin{equation*}
\int_{B^n(x,2^{-j}r)^b}\Psi(\vert\nabla u\vert)\,d\mu_{\alpha}\leq\epsilon(2^{-j}r)^{n-1}\log^{\lambda}\left(\frac{1}{r}\right),
\end{equation*}
then from \eqref{cases} we have that
\begin{equation*}
\frac{1}{3}-2cr^{\frac{1}{2}}\leq c\epsilon^{\frac{1}{n+\alpha-1}}\log^{\frac{\lambda}{n+\alpha-1}}\left(\frac{1}{r}\right)\sum_{j=0}^{\infty}(2^{-j})^{\frac{n-2}{n+\alpha-1}}\log^{-\frac{\lambda}{n+\alpha-1}}\left(e+(2^{-j}r)^{-\frac{1}{2}}\right)\leq c\epsilon^{\frac{1}{n+\alpha-1}},
\end{equation*}
for $n>2.$ Since $\mathcal{H}^{n-1}_{\infty}(A)\geq r^{n-1}/3,$ the same procedure yields us
\begin{equation*}
\int_{B^n(\xi,2r)^+}\Psi(\vert\nabla u\vert)\,d\mu_{\alpha}\geq cr\log^{\lambda}\left(\frac{1}{r}\right).
\end{equation*}
\end{proof}
\noindent\textbf{Proof of Theorem A.}
For simplicity we can assume that $E\subset I_{n-1}=(0,1)^{n-1}.$ It is easy to check that $(p,\lambda,\alpha)$-removability is equivalent to the requirement that for each $u\in W^{1,\Psi}(B^n(0,2)\setminus E,\mu_{\alpha})\cap C^1(B^n(0,2)\setminus E),$ $u^+(x)=u^-(x)$ holds for $\mathcal{H}^{n-1}$-a.e. $x\in E.$ Here $u^+(x)=\lim_{t\rightarrow 0+}u(x_1,x_2,\ldots,x_{n-1},t),$ $u^-(x)=\lim_{t\rightarrow 0-}u(x_1,x_2,\ldots,x_{n-1},t)$ and these limits exist for $\mathcal{H}^{n-1}$-a.e. $x=(x_1,x_2,\ldots,x_{n-1},0)\in E,$ by the Fubini theorem and the fundamental theorem of calculus. Let $u\in W^{1,\Psi}(B^n(0,2)\setminus E,\mu_{\alpha})\cap C^1(B^n(0,2)\setminus E).$ Therefore, it is enough to show that
\begin{equation}\label{decay}
\int_{B^n(x,r_i)}\Psi(\vert \nabla u(z)\vert)\,d\mu_{\alpha}(z)\geq C_xr_i^{n-1}
\end{equation} 
for all large $i$ whenever $x=(x_1,x_2,\ldots,x_{n-1},0)\in E$ is such that the one-sided limits $u^+(x),$ $u^-(x)$ do not coincide and the porosity condition holds at $x.$ Here $B^n(x,r_i)$ is the $n$-dimensional ball corresponding to the $(n-1)$-dimensional ball $B(x,r_i)$ from the definition of porosity. Let $B_i, F_i$ and $R_i$ also be retained from the definition of porosity and take $\zeta_i\in \mathbb{R}^{n-1}\cap B^n(x,r_i)$ are the centres of the balls $B_i$ in Definition \ref{porosity} but take $\zeta_i\in \mathbb{R}^{n-1}\cap B^n(x,r_i)$ such that $F_i\subset B^n(\zeta_i,R_i)$ when we have continua in Definition \ref{porosity}. By symmetry and porosity we may assume that the upper limit is $1,$ the lower limit is $0$ and $u\leq 1/2$ in a set $A\subset B_i$ with $\mathcal{H}^{n-1}_{\infty}(A)\geq\frac{1}{2}\mathcal{H}^{n-1}_{\infty}(B_i)$ or in a compact set $A\subset F_i$ with $\mathcal{H}^{1}_{\infty}(A)\geq\frac{1}{2}\mathcal{H}^{1}_{\infty}(F_i).$\\
Let $I_n=(0,1)^n$ and $I_n^+$ denotes the upper half of $I_n.$ For any $\epsilon>0,$ we can write $u\in W^{1,1+\alpha+\epsilon}(I_n^+,\mu_{\alpha})$ when $\alpha\geq 0,$ otherwise we can write $u\in W^{1,1+\epsilon}(I_n^+,\mu_{\alpha}).$ In any case, we use H\"older inequality to deduce that $u\in W^{1,1}(I_n^+).$  Then we reflect $u$ with respect to the hyperplane $x_n=0$ to obtain a function $v\in W^{1,1}(I_n),$ which coincides with $u$ in the upper half plane. Now since $u$ coincides with $v$ in the upper half plane and our weight $\vert x_n\vert^{\alpha}$ is symmetric with respect to the hyperplane we reflect, so we get $v\in W^{1,1+\alpha+\epsilon}(I_n,\mu_{\alpha})$ when $\alpha\geq 0$ or $v\in W^{1,1+\epsilon}(I_n,\mu_{\alpha})$ otherwise. Applying Theorem 3 of \cite{FM03} for $v$ and using Remarks $1$ \& $2$ of it, we conclude that for $\mathcal{H}^{n-1}$-a.e. $x\in\mathbb{R}^{n-1},$ $\dashint_{B^n(z,z_n/2)}v(y)\,d\mu_{\alpha}(y)$ tends to $1$ as $z\rightarrow x$ along $T(x,a)$ for every $a>0.$ Here $T(x,a)=\{z\in\mathbb{R}^n_+:\vert z-x\vert <az_n\},$ where $\mathbb{R}^n_+=\{x=(x',x_n):x_n>0\},$ and $a>0.$ Now, assuming that $i$ is large enough, we can take a ball $B^n(y_i,R_i)$ with $y_i=\zeta_i+(0,0,\ldots,2R_i)$ such that $u_{B^n(y_i,R_i)^+}\geq 1/2.$\\
\indent Suppose that $\dashint_{B^n(\zeta_i,R_i)^+}u\,d\mu_{\alpha}\leq 1/3.$ Then we have by Poincar\'e inequality
\begin{eqnarray*}
\frac{1}{6}\leq\vert u_{B^n(\zeta_i, R_i)^+}-u_{B^n(y_i,R_i)^+}\vert &\leq & C\dashint_{B^n(\zeta_i, 3R_i)^+}\vert u-u_{B^n(\zeta_i, 3R_i)^+}\vert\,d\mu_{\alpha}\\
&\leq & CR_i\left(\dashint_{B^n(\zeta_i, 3R_i)^+}\vert \nabla u\vert ^p\,d\mu_{\alpha}\right)^{1/p}.
\end{eqnarray*}
Then we split the balls into \lq\lq good\rq\rq\ parts and \lq\lq bad\rq\rq\ parts as in Lemma \ref{mainlemma} to obtain
\begin{equation}\label{equation1}
\int_{B^n(x,7r_i)}\Psi(\vert \nabla u\vert)\,d\mu_{\alpha}\geq\int_{B^n(\zeta_i,3R_i)^+}\Psi(\vert \nabla u\vert)\,d\mu_{\alpha}\geq CR_i^{n+\alpha-p}\log^{\lambda}\left(\frac{1}{R_i}\right)
\end{equation}
for $\lambda\geq 0$ (for $\lambda<0$ we apply Jensen's inequality and use \eqref{inverse} similarly as in Lemma \ref{mainlemma} to obtain this estimate).\\
\indent Suppose then that $\dashint_{B^n(\zeta_i,R_i)^+}u\,d\mu_{\alpha}\geq 1/3.$ We apply Lemma \ref{mainlemma} to conclude that
\begin{equation}\label{equation2}
\int_{B^n(x,r_i)}\Psi(\vert\nabla u\vert)\, d\mu_{\alpha}\geq
\begin{cases}
CR_i\log^{\lambda-(n+\alpha-1)}\left(\frac{1}{R_i}\right)&~\text{when}~p=n+\alpha-1,~\lambda>n+\alpha-1,\\
CR_i^{n+\alpha-p}\log^{\lambda}\left(\frac{1}{R_i}\right)&~\text{otherwise}.
\end{cases}
\end{equation}
We take respective minimums of the two inequalities \eqref{equation1} and \eqref{equation2} and use the definition of porosity to complete the proof.
\begin{remark}
As in \cite{Kar15} and \cite{Kos99}, we can also construct a $(p,\lambda,\alpha)$-porous set which is not removable for $W^{1,\Psi}(\mathbb{R}^n,\mu_{\alpha})$ for $\Psi'(t)=t^p\log^{\lambda-\epsilon}(e+t)$ for any $\epsilon>0.$  
\end{remark}
\def\bibname{References}
\bibliography{removable_weighted}
\bibliographystyle{alpha}
\end{document}